\newtheorem{theorem}{Theorem}[section]
\newtheorem{corollary}[theorem]{Corollary}
\newtheorem*{question}{Question} 
\newtheorem{proposition}[theorem]{Proposition}
\theoremstyle{definition}
\numberwithin{equation}{section}
\newcommand{\R}{{\mathbb{R}}}
\newcommand{\eps}{{\varepsilon}}
\renewcommand{\P}{{\mathcal{P}}}
\newcommand{\C}{{\breve{C}}}
\newcommand{\cpw}{{\breve{\breve{w}}}}
\newcommand{\cw}{{\breve w}}
\newcommand{\K}{{\mathcal{K}}}
\newcommand{\cR}{{\mathscr{R}}}
\newcommand{\Sph}{{\mathbb{S}}}
\newcommand{\EE}{{\mathcal{E}}}
\title[Whitney-type estimates for convex functions]{Whitney-type estimates for convex functions}
\author{Jaskaran Singh Kaire}
\address{Department of Mathematics, University of Manitoba, Winnipeg, MB, R3T 2N2, Canada}
\email{singhj82@myumanitoba.ca}
\thanks{The first author was supported by the University of Manitoba Faculty of Science Undergraduate Research Award.}
\author{Andriy Prymak}
\address{Department of Mathematics, University of Manitoba, Winnipeg, MB, R3T 2N2, Canada}
\email{prymak@gmail.com}
\thanks{The second author was supported by NSERC of Canada Discovery Grant RGPIN-2020-05357.}
\keywords{Approximation of convex functions, Whitney-type inequality, Whitney constant, modulus of smoothness, multivariate polynomial approximation, shape preserving approximation, Banach-Mazur distance}
\subjclass[2020]{Primary 41A10; Secondary 41A25, 41A63, 52A20, 52A40}
\begin{document}
	
	\begin{abstract}
		We study Whitney-type estimates for approximation of convex functions in the uniform norm on various convex multivariate domains while paying a particular attention to the dependence of the involved constants on the dimension and the geometry of the domain.
	\end{abstract}	
	
	\maketitle
	
	\section{Introduction and results}
	
	\subsection{Introduction}
	
	Whitney~\cite{Wh} showed that for any function $f$ continuous on $[0,1]$ there exists an algebraic polynomial $p_{m-1}$ of degree $\le m-1$ such that
	\begin{equation}\label{eqn:basic Whitney}
		\max_{x\in[0,1]}|f(x)-p_{m-1}(x)|\le w(m) \max_{x,x+mh\in[0,1]}\left|\sum_{j=0}^m (-1)^j \binom{m}{j}f(x+jh)\right|,
	\end{equation}
	where $w(m)$ is a positive constant depending only on $m$, and $m$ is an arbitrary positive integer. In the inequality~\eqref{eqn:basic Whitney}, the left-hand side is the error of uniform approximation of $f$ by $p_{m-1}$, while the maximum in the right-hand side is the $m$-th order modulus of smoothness. Bounds of the approximation error by a measure of smoothness are classical in approximation theory, see, e.g.~\cite{DeLo}*{Chapters~2, 7}. 
	
	Typically, a Whitney type estimate like~\eqref{eqn:basic Whitney} would be applied on an interval of a partition and used for construction of piecewise polynomial approximation, so the degree $m$ is fixed while the number of pieces grows. In other words, Whitney type inequalities help bound \emph{local} approximation error.
	
	Estimating the values of involved (Whitney) constants is an important question which attracted a lot of attention. As an example of such a result, let us mention that Gilewicz, Kryakin and Shevchuk~\cite{GiKrSh} obtained the best known bound on the smallest possible $w(m)$ in~\eqref{eqn:basic Whitney} valid for all positive integers $m$, which is $w(m)\le 2+e^{-2}$.
	
	In the multivariate settings, a comprehensive study of Whitney constants was conducted by Brudnyi and Kalton in~\cite{BrKa}. For approximation on convex domains, it was shown by Dekel and Leviatan~\cite{DeLe} that the corresponding Whitney constant does not depend on specific geometry of the domain. Recently, the second author and Dai~\cite{DaPr} obtained directional Whitney inequalities valid for certain classes of domains which are not necessarily convex. 
	
	Our goal in this work is to establish Whitney-type estimates for the case when the function we need to approximate is \emph{convex}. This additional restriction may, by itself, lead to better approximation rate (smaller values of Whitney constant). Another and different problem of our interest is to study Whitney-type estimates for approximation of convex function by polynomials which are also required to be convex, i.e., the problem of \emph{shape preserving approximation}. For a survey of shape preserving polynomial approximation on an interval (in global settings), see~\cite{KLPS}. Very little is known for convexity preserving multivariate polynomial approximation, where perhaps the most significant contribution is that of Shvedov~\cite{Sh-multivar}. 
	
	In our investigations in this work, we made an effort to track and decrease the dependence of involved constants on the dimension of the space and the geometry of the domain, which could be of particular importance for possible application in data science where one needs to work with high dimensional data. 
	
	\subsection{Notations}
	
	Let $\P_{m,n}$ denote the space of algebraic polynomials of total degree $\le m$ in $n$ variables, and $C(K)$ be the space of continuous real valued functions on $K$ equipped with the norm $\|f\|_K:=\max_{x\in K}|f(x)|$, where $K\subset\R^n$ is a compact set. For $f\in C(K)$ the error of uniform polynomial approximation is defined as
	\[
	E_m(f;K):=\inf_{P\in \P_{m,n}}\|f-P\|_K,
	\]
	(note that in some literature, e.g., in~\cite{BrKa}, $\P_{m-1,n}$ is used in place of $\P_{m,n}$ in the definition of $E_m$, but in our opinion matching the index to the total degree of the polynomial is more natural in multivariate settings) and the $m$-th modulus of smoothness of $f$ on $K$ is
	\[
	\omega_m(f;K):=\max_{x,x+h,\dots,x+mh\in K}|\Delta^m_h(f;x)|,
	\]
	where
	\[
	\Delta^m_h(f;x):=\sum_{j=0}^m (-1)^j \binom{m}{j}f(x+jh).
	\]
	One can find elementary properties of moduli of smoothness in~\cite{DeLo}*{Sect.~2.7}. 
	
	We define the \emph{Whitney constant} $w_m(K)$ by
	\begin{equation}\label{eqn:def whitney}
		w_m(K):=\sup\{E_{m-1}(f;K): f\in C(K) \text{ and } \omega_m(f;K)\le 1 \}.
	\end{equation}
	In what follows, we assume that $K\subset \K^n$, where $\K^n$ is the class of $n$-dimensional convex bodies, i.e., compact convex (the segment joining any two points of $K$ entirely belongs to $K$) subsets of $\R^n$ having nonempty interior. Note that any compact convex set in $\R^n$ with empty interior is a convex body in an appropriate affine subspace of $\R^n$ of smaller dimension. We let $\C(K)$ to be all functions from $C(K)$ which are convex on $K$, i.e. those $f\in C(K)$ satisfying $f(x)+f(y)\ge2 f((x+y)/2)$ for any $x,y\in K$. Now we can define \emph{Whitney constant for convex functions} $\cw_m(K)$ by
	\begin{equation}\label{eqn:def whitney for convex}
		\cw_m(K):=\sup\{E_{m-1}(f;K): f\in \C(K) \text{ and } \omega_m(f;K)\le 1 \}.
	\end{equation}
	For $f\in\C(K)$, the error of approximation by \emph{convex} polynomials is
	\[
	\breve E_m(f;K):=\inf_{P\in \P_{m,n}\cap \C(K)}\|f-P\|_K.
	\]
	Finally, the \emph{convexity preserving Whitney constant} $\cpw_m(K)$ is
	\begin{equation}\label{eqn:def convexity pres whitney}
		\cpw_m(K):=\sup\{\breve E_{m-1}(f;K): f\in \C(K) \text{ and } \omega_m(f;K)\le 1 \}.
	\end{equation}
	The following relation between the three constants defined in~\eqref{eqn:def whitney}, \eqref{eqn:def whitney for convex} and \eqref{eqn:def convexity pres whitney} is immediate:
	\begin{equation}\label{eqn:obvious relation between whitneys}
		w_m(K)\ge \cw_m(K)\le \cpw_m(K).
	\end{equation} 
	We define the corresponding \emph{global} Whitney constants $w_{m,n}$, $\cw_{m,n}$ and $\cpw_{m,n}$ as the suprema of the left-hand-sides of~\eqref{eqn:def whitney}, \eqref{eqn:def whitney for convex} and \eqref{eqn:def convexity pres whitney}, respectively, over $K\in \K^n$.
	
	It is straightforward that for any $f\in C(K)$ we have $E_0(f;K)=\frac12 \omega_1(f;K)$ and $w_1(K)=\cw_1(K)=\cpw_1(K)=\frac12$, so we will proceed by discussing the cases when $m\ge 2$.
	
	\subsection{Approximation by linear functions}
	One of the main results of~\cite{BrKa} is the next theorem.
	\begin{theorem}[\cite{BrKa}*{Th.~3.1}]\label{thm:brudnyi kalton linear general}
		The following inequalities hold:
		\[
		\frac12\log_2\left(\left[\frac n2\right]+1\right) 
		\le w_{2,n}
		\le \frac12[\log_2 n]+\frac 54,
		\]
		in particular,
		\[
		\lim_{n\to\infty}\frac{w_{2,n}}{\log_2n}=\frac12.
		\]
	\end{theorem}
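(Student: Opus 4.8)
The plan is to use the affine invariance of the quantities involved to strip away the fine geometry of $K$, leaving a single scale parameter—the aspect ratio of the body—which John's theorem bounds by $n$; the factor $\tfrac12\log_2 n$ will then emerge because there are at most $\log_2 n$ scales and each can cost at most $\tfrac12$. Concretely, both $E_1(\cdot;K)$ and $\omega_2(\cdot;K)$ are invariant under an invertible affine change of variables $T$: affine functions are carried to affine functions, so $E_1(f\circ T;T^{-1}K)=E_1(f;K)$, and since $\Delta^2_h(f\circ T;x)=\Delta^2_{Th}(f;Tx)$ one also gets $\omega_2(f\circ T;T^{-1}K)=\omega_2(f;K)$; hence $w_2(K)=w_2(TK)$. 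Placing $K$ in John position, we may assume $B^n\subseteq K\subseteq nB^n$ with the John center at the origin, so the circumradius-to-inradius ratio is at most $n$.

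\emph{Upper bound.} Given $f$ with $\omega_2(f;K)\le1$, I would construct a single affine $L$ and estimate $g:=f-L$, noting $\omega_2(g;K)\le1$. The natural engine is a recursion that halves the dimension, $\R^n=\R^{\lceil n/2\rceil}\times\R^{\lfloor n/2\rfloor}$: one approximates $f$ in the first block of variables affinely (uniformly in the second block), reduces to the lower-dimensional situation with a controlled increase of the modulus, and iterates, reaching dimension one after $\le\log_2 n$ levels, where the chord estimate gives the sharp seed $w_2([0,1])=\tfrac12$. John position guarantees that each split is geometrically balanced, so that every level contributes an \emph{additive} $\tfrac12$. The workhorse at each level is the univariate fact that on any segment $[a,b]\subseteq K$ the deviation of a function from its own chord is at most $\tfrac12\omega_2\le\tfrac12$; this is what yields the clean $\tfrac12$ per level and prevents the amplification by a factor $2$ that a naive telescoping of second differences produces. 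Summing $\tfrac12$ over the $\le\log_2 n$ levels and absorbing boundary terms into the constant gives $w_{2,n}\le\tfrac12[\log_2 n]+\tfrac54$.

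\emph{Lower bound.} Here I would exhibit one body $K\subseteq\R^n$ together with a convex, piecewise-affine $f$ with $\omega_2(f;K)\le1$ whose best affine approximation is far. The appearance of $[n/2]$ suggests grouping the coordinates into $N=[n/2]$ pairs and placing the pairs at geometric scales $1,2,\dots,2^{N}$, so that $K$ exhibits $\log_2(N+1)$ genuinely distinct scales and $f$ forces an error of $\tfrac12$ at each scale, giving $E_1(f;K)\ge\tfrac12\log_2(N+1)$. Since $f$ is piecewise affine, $\omega_2(f;K)$ is governed by the sizes of its kinks, which the scaling keeps at $\le1$. The assertion that no affine function does better is best phrased dually: I would produce a finitely supported signed measure $\mu$ on $K$ that annihilates every affine function, satisfies $\int f\,d\mu\ge\tfrac12\log_2(N+1)$, and admits a representation as a combination of second differences $\Delta^2_h$ of total weight at most $1$, so that $E_1(f;K)\ge\int f\,d\mu$.

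\emph{Main obstacle.} In both directions the crux is the interaction between scales. For the upper bound it is pinning down a single global affine $L$ that is simultaneously consistent with the chord of $f$ in every direction, so that the error accumulates only additively across the scales permitted by John's theorem rather than being amplified geometrically; the exact constant $\tfrac54$ requires careful bookkeeping of the base case. For the lower bound the dual difficulty is designing the configuration of $[n/2]$ scales together with the annihilating functional $\mu$ so that the $\tfrac12$ contributions genuinely add up while $\omega_2(f;K)\le1$ is maintained. Once matching bounds of the form $\tfrac12\log_2 n+O(1)$ are established, the stated limit $w_{2,n}/\log_2 n\to\tfrac12$ is immediate.
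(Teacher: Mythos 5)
Your proposal is an outline rather than a proof: both of its load-bearing steps are left unestablished, and they are precisely the hard parts. Note first that the paper does not prove this theorem itself --- it is quoted from \cite{BrKa}*{Th.~3.1} --- but the machinery of that proof is visible in Section~2, where the convex analogue (\cref{thm:convex linear general}) is proved, and it is quite different from your route. The upper bound in \cite{BrKa} does not use John's theorem at all: by the duality formula of \cref{prop:E_1 computation general} (\cite{BrKa}*{Prop.~3.2}), $E_1(f;K)$ is computed over configurations of at most $n+2$ points of $K$ with matching barycenters; affine invariance then reduces everything to a fixed standard simplex (this is where the geometry of $K$ is stripped away --- any $n$-simplex is affinely equivalent to the standard one, so no John position is needed), and the factor $\frac12[\log_2 n]$ emerges from the combinatorial recursion $\beta_{2r}\le\beta_r+\frac12$ (\cite{BrKa}*{Lemma~3.4}) on the point-configuration functional, not from splitting the coordinates into blocks. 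Your ``workhorse'' claim --- that one can approximate affinely in half of the variables, uniformly in the other half, and glue the results so that each halving costs an additive $\frac12$ --- is exactly the content one would need to prove, and you explicitly defer it (``the crux is pinning down a single global affine $L$''). The chord estimate gives $\frac12\omega_2$ on each segment, but chords in different directions do not assemble into a single affine function without an argument; that assembly is the entire difficulty, and it is what the barycentric configuration formalism of \cite{BrKa} is designed to handle.

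The lower bound has the same problem: you describe what a construction ``should'' look like (pairs of coordinates at geometric scales, an annihilating measure $\mu$ representable by second differences), but you produce no body, no function, no measure, and no verification that $\omega_2\le1$ while $E_1\ge\frac12\log_2\left(\left[\frac n2\right]+1\right)$. The actual extremal example is concrete and different: $K$ is a Cartesian product of two simplexes of dimension about $n/2$, and the extremal function is built from the entropy-type function $f_N(x)=\frac12\sum_k x_k\log_2 x_k$ on each factor --- the same $f_n$ the paper writes down for the lower bound of \cref{thm:convex linear general}, where a single simplex already gives the halved constant $\frac14\log_2(n+1)$, and passing to a product of two simplexes is what doubles the constant (at the price of destroying convexity of the extremal function, which is why the convex Whitney constant is smaller by the factor $\frac12$). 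The logarithm arises from the analytic behavior of $x\log_2 x$ under the barycentric functional, not from an explicit hierarchy of geometric scales. In short, your heuristics correctly anticipate the shape of the answer, but neither direction of your argument can be completed as written without supplying, in effect, the missing lemmas of \cite{BrKa}.
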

	($[x]$ denotes the largest integer not exceeding $x$.)
	
	Since any element of $\P_{1,n}$ is a convex function, $\cw_{2,n}(K)=\cpw_{2,n}(K)$, so we will be only concerned with $\cw_2(K)$. By minor modifications of the proofs from~\cite{BrKa}, we will show that the behaviour of the corresponding Whitney constant for convex functions is smaller by essentially the factor of $\frac12$, namely, we prove:
	\begin{theorem}\label{thm:convex linear general}
		The following inequalities hold:
		\[
		\frac14\log_2(n+1) 
		\le \cw_{2,n}
		\le \frac14[\log_2 n]+\frac 34,
		\]
		in particular,
		\[
		\lim_{n\to\infty}\frac{\cw_{2,n}}{\log_2n}=\frac14.
		\]
	\end{theorem}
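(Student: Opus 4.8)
The plan is to adapt, with the modifications forced by convexity, the two halves of the proof of \Cref{thm:brudnyi kalton linear general} in~\cite{BrKa}. The single structural feature that drives the improvement is the one-sided control of second differences for convex functions: if $f\in\C(K)$ then for every admissible $x$ and $h$ one has
\[
0\le \Delta^2_h(f;x)=f(x)-2f(x+h)+f(x+2h)\le \omega_2(f;K),
\]
so that, whereas a general $f$ only satisfies $-\omega_2\le\Delta^2_h(f;x)\le\omega_2$, a convex $f$ confines the same quantity to an interval of half the length. Replacing the symmetric bound $|\Delta^2_h(f;x)|\le\omega_2$ by this one-sided bound, and re-centering each estimate around the midpoint $\tfrac12\omega_2$ of the shortened interval, is what converts the coefficient $\tfrac12$ into $\tfrac14$. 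Throughout we use that $\omega_2$, convexity, and $E_1$ are all invariant under invertible affine changes of variable, so we may normalize $K$ as convenient.

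For the upper bound $\cw_{2,n}\le\frac14[\log_2 n]+\frac34$ I would follow the Brudnyi--Kalton construction of a near-best linear approximant. Their argument produces the linear piece through a sequence of roughly $[\log_2 n]$ steps (a dyadic reduction of the dimension), and at each step the incurred error is estimated by a second difference of $f$; summing these contributions yields the $\frac12[\log_2 n]$ term, with the additive $\frac54$ coming from the terminal low-dimensional estimate. Feeding the one-sided bound into each of these steps halves the per-step contribution from $\tfrac12\omega_2$ to $\tfrac14\omega_2$, giving $\frac14[\log_2 n]$, and the same re-centering improves the terminal constant from $\frac54$ to $\frac34$. The delicate point here is bookkeeping: I must check that at every place where~\cite{BrKa} invoke $|\Delta^2_h(f;x)|\le\omega_2$ the estimated quantity really is a single (or consistently signed) second difference, so that the gain of the factor $\tfrac12$ genuinely propagates through the recursion rather than being cancelled when several differences of opposite sign are combined.

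For the lower bound $\cw_{2,n}\ge\frac14\log_2(n+1)$ I would exhibit an explicit convex extremal function. The model is the one-dimensional tent $f(x)=|x-\tfrac12|$ on $[0,1]$, which is convex, satisfies $\omega_2(f;[0,1])=1$, and has $E_1(f;[0,1])=\tfrac14=\tfrac14\log_2 2$. To obtain logarithmic growth in $n$ I would build a convex function on a suitable body in $\R^n$ by a recursive doubling that matches the pattern $n=2^k-1\mapsto \tfrac{k}{4}$ visible in the target bound: since $2^{k+1}-1=2(2^k-1)+1$, two copies of the level-$k$ construction are joined along one new coordinate by a tent-type profile, so that any single linear function can follow the average of the two halves but must pay an extra $\tfrac14$ to reconcile them, while the kinks are arranged so that second differences never reinforce and $\omega_2$ stays $\le1$. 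Being a maximum of affine functions (or an analogous piecewise-linear object), the resulting function is automatically convex. The main obstacle is precisely this simultaneous control: ensuring that the recursive assembly keeps $\omega_2\le1$---which requires the tents at different levels and along different coordinate directions not to add up---while still forcing $E_1$ to increase by $\tfrac14$ at each of the $\log_2(n+1)$ levels. Verifying the lower bound on $E_1$ would be carried out by duality or by testing the candidate best linear function against a few extreme points, exactly as in~\cite{BrKa} but keeping track of the one-sided structure that accounts for the factor $\tfrac14$.
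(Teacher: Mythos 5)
Your guiding intuition---convexity makes second differences one-sided, and one-sidedness can be converted into a factor-of-two gain---is correct, but you spend it in the wrong place, and the step you build on is false. In the Brudnyi--Kalton scheme the additive cost of each dyadic step is (in essence) a two-point Jensen defect, and such defects are already extremal for \emph{convex} functions: the tent $g(t)=|t-\tfrac12|$ on $[0,1]$ is convex, has $\omega_2(g;[0,1])=1$, and
\[
\tfrac12 g(0)+\tfrac12 g(1)-g\left(\tfrac12\right)=\tfrac12 ,
\]
which matches the general worst case (with unequal weights, off-center tents even drive the two-point defect up to $\omega_2$, which is why $\beta_1=1$). So feeding $0\le\Delta^2_h(f;x)\le\omega_2$ into each step of the recursion does \emph{not} halve the per-step contribution: the recursion $\beta_{2r}\le\beta_r+\frac12$ of \cite{BrKa}*{Lemma~3.4} and its initial value $\beta_1=1$ are not improved on the class of convex functions (indeed the near-extremal function for $\beta_n$, the entropy function mentioned below, is itself convex). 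The factor $\frac12$ is gained \emph{once}, globally, at the level of the dual characterization of $E_1$: by \cref{prop:E_1 computation general}, $E_1(f;K)$ equals $\frac12$ times a maximum over \emph{two} groups of points with coinciding barycenters, and for convex $f$ Jensen's inequality collapses the second group to a single point (\cref{cor:E_1 computation convex}), giving $E_1(f;K)\le\frac12\delta_{n+1}(f)\le\frac12\beta_n\,\omega_2(f;K)$ in place of the bound with $\beta_n$ valid for general $f$. Combining this single factor $\frac12$ with the \emph{unchanged} recursion ($\beta_{2^r}\le\frac12 r+1$, $r=[\log_2 n]+1$) is what yields $\cw_{2,n}\le\frac14[\log_2 n]+\frac34$. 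Your per-step accounting happens to reproduce the same leading term numerically ($\log_2 n$ steps of size $\frac14$ versus one half of $\frac12\log_2 n$), which makes the error easy to miss, but the ``bookkeeping check'' you flag as delicate would in fact fail at every step.

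For the lower bound, what you offer is a plan rather than a proof: the two properties you defer---keeping $\omega_2\le1$ under the recursive doubling while forcing $E_1$ to grow by $\frac14$ per level---constitute the entire difficulty, and no concrete construction is specified or verified. The paper does not build a hierarchical piecewise-linear function at all; it takes the explicit entropy function on the standard simplex $S^n\subset\R^{n+1}$,
\[
f_n(x)=\frac12\sum_{k=1}^{n+1}x_k\log_2 x_k ,
\]
which is convex, and cites \cite{BrKa}*{p.~177} (where this same function underlies the lower bound for $w_{2,n}$) for the verifications $\omega_2(f_n;S^n)\le1$ and $E_1(f_n;S^n)\ge\frac14\log_2(n+1)$. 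A self-contained version of your approach would have to carry out analogous computations for your doubling scheme; as written, nothing is checked, so the lower bound is not established.
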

	The lower bound in~\cref{thm:brudnyi kalton linear general} is obtained by considering $K$ as the Cartesian product of two simplexes, while in~\cref{thm:convex linear general} we simply take $K$ as a simplex. Either way, the ``bad'' domains $K$ are not centrally symmetric. However, it is known that asymptotically the behavior of the global Whitney constant is the same up to an absolute constant factor $C$ (independent of dimension) even if one restricts the domains to be symmetric. Namely (see~\cite{BrKa}*{Remark~(c), p.~162})
	\[
	w_{2,n}^{(s)}\le w_{2,n} \le C w_{2,n}^{(s)},
	\]
	where $w_{m,n}^{(s)}=\sup\{w_m(K): K\in\K^n, K=-K\}$. In particular, we have $\lim_{n\to\infty} w_{2,n}^{(s)}=\infty$.
	
	We show that the situation for Whitney constants for convex functions is completely different when the domain is symmetric. Moreover, we find the exact value of the corresponding constant for arbitrary centrally symmetric convex domain.
	\begin{theorem}\label{thm:convex linear symmetric}
		For arbitrary $K\in\K^n$, $K=-K$, we have $\cw_{2}(K)=\frac12$.
	\end{theorem}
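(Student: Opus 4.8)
The plan is to establish the two inequalities $\cw_2(K)\le\frac12$ and $\cw_2(K)\ge\frac12$ separately. The upper bound is the heart of the matter and, pleasantly, holds in every dimension via a single symmetry argument; the lower bound will be reduced to a one-dimensional example placed along a diameter of $K$.

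For the upper bound, fix $f\in\C(K)$ with $\omega_2(f;K)\le1$. Since $K=-K$ has nonempty interior, the center $0$ lies in $\intr K$, so $f$ admits a subgradient $g$ at $0$, i.e. $f(x)\ge f(0)+\langle g,x\rangle$ for all $x\in K$. Set $\phi(x):=f(x)-f(0)-\langle g,x\rangle\ge0$. The key observation is that for every $x\in K$ we also have $-x\in K$ and $0=\frac{x+(-x)}2\in K$, so the triple $(-x,0,x)$ is admissible for the second difference and
\[
\phi(x)+\phi(-x)=f(x)+f(-x)-2f(0)=\Delta^2_{x}(f;-x)\le\omega_2(f;K)\le1 .
\]
Because $\phi\ge0$, this forces $0\le\phi(x)\le1$ on $K$. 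Taking the affine function $L(x):=f(0)+\langle g,x\rangle+\frac12\in\P_{1,n}$ we get $f(x)-L(x)=\phi(x)-\frac12\in[-\tfrac12,\tfrac12]$, whence $\|f-L\|_K\le\frac12$ and $E_1(f;K)\le\frac12$. As $f$ was arbitrary, $\cw_2(K)\le\frac12$.

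For the lower bound I would exhibit convex functions with $\omega_2\le1$ and $E_1$ arbitrarily close to $\frac12$. Choose a unit functional $u$ with $\max_{x\in K}\langle u,x\rangle=c$; by symmetry the range of $\langle u,\cdot\rangle$ on $K$ is exactly $[-c,c]$, attained at antipodal boundary points $\pm p$. For a one-variable convex $g$ on $[-c,c]$ put $f(x):=g(\langle u,x\rangle)$. Then $f\in\C(K)$, every second difference of $f$ equals a second difference of $g$ over a triple in $[-c,c]$, so $\omega_2(f;K)\le\omega_2(g;[-c,c])$; and restricting to the diameter $[-p,p]$, on which $f$ realizes all values of $g$, shows $E_1(f;K)\ge E_1(g;[-c,c])$. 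It thus suffices to produce $g$ on $[-c,c]$ (equivalently, by affine invariance, on $[-1,1]$) with ratio $E_1(g)/\omega_2(g)$ near $\frac12$. The one-sided kink $g_t(x)=\max(0,x-t)$, $t\in(0,1)$, does this: a direct computation gives $\omega_2(g_t;[-1,1])=1-t$, and, since $g_t$ is convex and piecewise linear, its best affine approximant equioscillates and yields $E_1(g_t;[-1,1])=\frac{1-t^2}4$, so $E_1/\omega_2=\frac{1+t}4\to\frac12$ as $t\to1^-$. This gives $\cw_2(K)\ge\frac12$ and completes the proof.

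The main obstacle is locating the right argument for the upper bound: the clean step is recognizing that the subgradient at the center reduces the problem to controlling the nonnegative function $\phi$, and that central symmetry lets the \emph{single} second difference on the triple $(-x,0,x)$ bound $\phi(x)+\phi(-x)$, hence $\phi(x)$ itself. Everything else is routine, though two points deserve care: the subgradient exists precisely because $0$ is interior (symmetry is used here too), and the extremal one-dimensional function is \emph{asymmetric} even though $K$ is symmetric, so one should not expect symmetric test functions to be extremal.
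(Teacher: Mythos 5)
Your proof is correct and follows essentially the same route as the paper: the upper bound subtracts the supporting linear function at the center of symmetry and bounds the resulting nonnegative function by the second difference over the triple $(-x,0,x)$, exactly as the paper does, and your lower bound uses the same family of ridge functions with a kink sliding toward the boundary (the paper's $g_\delta(x_1)=\max\{0,(x_1-1+\delta)/\delta\}$ is just a reparametrization of your $g_t$). The only cosmetic difference is that you verify the one-dimensional reduction via one-sided inequalities and equioscillation where the paper invokes the Chebyshev alternation theorem.
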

	The main idea of the upper bound is to utilize the existence of a supporting hyperplane at the center of symmetry to the graph of the function which needs to be approximated. The lower bound is rather standard and is essentially one-dimensional.
	
	\subsection{Approximation of convex functions by polynomials of degree $\ge2$}
	In contrast to the previous subsection, once the degree of the approximating polynomial is at least 2, then one does not get any improvement in the values of Whitney constants even if the function to be approximated is convex. Our main result for this situation is the following.
	\begin{theorem}
		\label{thm:convex by at least quadratic}
		For any $K\in\K^n$ and $m\ge 3$ we have $\cw_m(K)=w_m(K)$.
	\end{theorem}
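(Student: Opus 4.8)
The inequality $\cw_m(K)\le w_m(K)$ is already recorded in~\eqref{eqn:obvious relation between whitneys}, so the entire content of the statement is the reverse inequality $\cw_m(K)\ge w_m(K)$, and this is exactly where the hypothesis $m\ge3$ will be decisive. The plan is to show that any admissible competitor for $w_m(K)$ can be converted into an admissible \emph{convex} competitor for $\cw_m(K)$ at an arbitrarily small cost in the approximation error and with no increase in the modulus. Concretely, I would fix $f\in C(K)$ with $\omega_m(f;K)\le1$ together with $\eps>0$, and aim to produce $g\in\C(K)$ with $\omega_m(g;K)\le1$ and $E_{m-1}(g;K)\ge E_{m-1}(f;K)-\eps$; taking the supremum over such $f$ and letting $\eps\to0$ would then give $\cw_m(K)\ge w_m(K)$, and hence equality.

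The construction of $g$ proceeds in two steps. First I would smooth $f$ without enlarging its modulus. Fixing $c\in\intr K$ and small parameters $\eta,\rho>0$, I would set
\[
\tilde f(x):=\int_{\R^n} f\bigl(c+(1-\eta)(x-c)-\rho z\bigr)\,\phi(z)\,dz,\qquad x\in K,
\]
where $\phi\ge0$ is a fixed $C^\infty$ mollifier supported in the unit ball with $\int\phi=1$. The point of first contracting $K$ toward $c$ by the factor $1-\eta$ is that the contracted copy sits at distance at least $\eta\,\dist(c,\partial K)$ from $\partial K$, so for $\rho$ small relative to $\eta$ all arguments above remain in $K$, and $\tilde f$ is well defined and $C^\infty$ on a neighborhood of $K$. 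Using the identity $\Delta^m_h(\tilde f;x)=\int_{\R^n}\Delta^m_{(1-\eta)h}\bigl(f;\,c+(1-\eta)(x-c)-\rho z\bigr)\,\phi(z)\,dz$, in which every inner difference uses $m+1$ nodes lying in $K$, one gets $\omega_m(\tilde f;K)\le\omega_m(f;K)\le1$; moreover, uniform continuity of $f$ forces $\|\tilde f-f\|_K<\eps$ once $\eta,\rho$ are small enough, whence $E_{m-1}(\tilde f;K)\ge E_{m-1}(f;K)-\eps$.

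Second, I would convexify $\tilde f$ by adding a paraboloid. Since $\tilde f$ is $C^2$ on a compact neighborhood of $K$, its Hessian is bounded below there, so there is $\lambda>0$ for which $Q(x):=\lambda|x|^2$ makes $g:=\tilde f+Q$ convex on the convex set $K$, that is, $g\in\C(K)$. Here the hypothesis $m\ge3$ enters in the only way that matters: $Q\in\P_{2,n}\subset\P_{m-1,n}$, so $Q$ is annihilated by $\Delta^m_h$ and at the same time belongs to the approximating space. Consequently $\omega_m(g;K)=\omega_m(\tilde f;K)\le1$ and $E_{m-1}(g;K)=E_{m-1}(\tilde f;K)\ge E_{m-1}(f;K)-\eps$, which is precisely the convex competitor sought above. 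I expect the genuine obstacle to be the first step rather than the second: one must smooth a merely continuous function while keeping the $m$-th modulus from growing near $\partial K$, and the contract-then-mollify device is what resolves this boundary issue. It is worth noting that for $m=2$ the argument breaks down exactly at the convexifying step, since then $Q$ has degree equal to the approximation degree and would alter $E_1$; this is consistent with $\cw_{2,n}<w_{2,n}$ for large $n$ coming from \cref{thm:brudnyi kalton linear general,thm:convex linear general}.
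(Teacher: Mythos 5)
Your proposal is correct and follows essentially the same strategy as the paper's proof: replace the extremal function by a smooth approximant, then convexify it by adding a large multiple of $\|x\|^2$, which for $m\ge3$ lies in $\P_{m-1,n}$ and is annihilated by $\Delta^m_h$, so it changes neither $E_{m-1}$ nor $\omega_m$. The only difference is the smoothing device --- the paper simply invokes the Weierstrass theorem to get a nearby polynomial (at the harmless cost of $\omega_m\le 1+\eps$), while you use a contract-and-mollify construction that keeps $\omega_m\le1$ exactly; both are valid, and the Weierstrass route avoids the boundary bookkeeping your mollification requires.
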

	The key idea for the proof is that for any sufficiently smooth function one can add an appropriate quadratic polynomial to that function to make it convex.
	
	Using~\cref{thm:convex by at least quadratic} and the corresponding results on the usual Whitney constant $w_m(K)$, one can obtain direct corollaries regarding the behaviour of $\cw_m(K)$, when $m\ge3$. For the global Whitney constants, the following is conjectured: if $m\ge 2$, then for any $n$
	\begin{equation}\label{eqn:conjecture for global}
		w_{m,n}\approx w_{m,n}^{(s)}\approx n^{m/2-1}\log(n+1),
	\end{equation}
	where the constants in the equivalences may depend on $m$ only. As we have seen from the previous subsection, this is confirmed for $m=2$. For $m\ge 3$ only the upper estimate on $w_{m,n}^{(s)}$ is known, as well as the following lower bounds: $w_{m,n}\ge w_{m,n}^{(s)}\ge c\sqrt{n}$. An interested reader is referred to~\cite{BrKa} for details and other known results for specific domains, such as unit balls in $l_p$ metric with $1\le p\le \infty$.
	
	\subsection{Convexity preserving approximation by polynomials of degree $\ge2$}
	
	We begin with a negative result for $m\ge4$.
	\begin{theorem}
		\label{thm:negative convexity preserving}
		For any $K\in\K^n$, $m\ge 4$, we have $\cpw_{m}(K)=\infty$.
	\end{theorem}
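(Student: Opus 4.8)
The plan is to reduce to a one-dimensional model and then lift it to an arbitrary $K$ by a plane-wave (ridge) function. Since $\breve E_{m-1}(\,\cdot\,;K)$ and $\omega_m(\,\cdot\,;K)$ are both positively homogeneous, it is enough to exhibit, for the given $K$ and every $N$, a function $f\in\C(K)$ with $\breve E_{m-1}(f;K)\ge N\,\omega_m(f;K)$; rescaling $f$ so that $\omega_m(f;K)=1$ then shows $\cpw_m(K)=\infty$.

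For the lift, fix a chord $S=[A,B]\subset K$ whose endpoint $A$ attains $\min_K\phi$, where $\phi(x):=\langle x,e\rangle$ and $e:=(B-A)/|B-A|$. Set $I:=\phi(K)$ and $J:=\phi(S)=[\phi(A),\phi(B)]\subseteq I$, a nondegenerate interval whose left endpoint $\phi(A)=\min_K\phi$. Given a convex $g\colon I\to\R$, put $f:=g\circ\phi$. Then $f\in\C(K)$, and since $\Delta^m_h(f;x)=\Delta^m_{\langle h,e\rangle}(g;\phi(x))$ we get $\omega_m(f;K)\le\omega_m(g;I)$. Conversely, the restriction to $S$ of any $P\in\P_{m-1,n}\cap\C(K)$ is a univariate convex polynomial of degree $\le m-1$ in the parameter $\phi\in J$, so $\|f-P\|_K\ge\|f-P\|_S\ge\breve E_{m-1}(g;J)$, whence $\breve E_{m-1}(f;K)\ge\breve E_{m-1}(g;J)$. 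It therefore suffices to produce a convex $g$ on $I$ for which $\breve E_{m-1}(g;J)/\omega_m(g;I)$ is arbitrarily large, with its ``bad point'' placed at $a:=\min_K\phi$, which lies in $J$.

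The univariate model is a barely non-convex cubic. For small $\eps>0$ let $Q_\eps$ be the cubic with $Q_\eps''(t)=(t-a)-2\eps$; it is convex on $I$ except on $[a,a+2\eps]$, where $-Q_\eps''\le2\eps$. Splicing in the tangent line to $Q_\eps$ at $t=a+4\eps$ over the interval $[a,a+4\eps]$ and keeping $Q_\eps$ elsewhere yields a convex $g$ on $I$ with $g-Q_\eps$ supported on $[a,a+4\eps]$ and $\|g-Q_\eps\|_I=O(\eps^3)$ (integrate $|Q_\eps''|\le2\eps$ twice over a window of length $4\eps$). Since $\deg Q_\eps\le3\le m-1$, this gives $\omega_m(g;I)=\omega_m(g-Q_\eps;I)\le2^m\|g-Q_\eps\|_I=O(\eps^3)$. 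For the lower bound, let $P\in\P_{m-1,1}$ be convex on $J$ and set $D:=Q_\eps-P$; convexity gives $P''(a)\ge0$, so $|D''(a)|\ge|Q_\eps''(a)|=2\eps$, while Markov's inequality for the second derivative of a degree-$(m-1)$ polynomial on $J$ gives $|D''(a)|\le\|D''\|_J\le M\|D\|_J$ with $M=M(m,|J|)$. Hence $\|Q_\eps-P\|_J\ge 2\eps/M$, so $\breve E_{m-1}(g;J)\ge 2\eps/M-\|g-Q_\eps\|_J=2\eps/M-O(\eps^3)$. Consequently $\breve E_{m-1}(g;J)/\omega_m(g;I)\gtrsim\eps^{-2}\to\infty$ as $\eps\to0^+$, which completes the plan.

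The heart of the matter, and the step I expect to demand the most care, is the mismatch of scales for the single cubic $Q_\eps$: a concavity of depth $\eps$ spread over a window of length $\eps$ can be repaired by an $O(\eps^3)$ vertical correction, so $Q_\eps$ is $O(\eps^3)$-close to the cone of convex functions; yet it stays $\Omega(\eps)$-far from every convex polynomial of degree $\le m-1$, because polynomial convexity is rigid, in that the value $Q_\eps''(a)=-2\eps$ cannot be removed by a low-degree polynomial perturbation of sup-norm $o(\eps)$, which is precisely what Markov's inequality encodes. The remaining points are routine but must be checked honestly: the $O(\eps^3)$ convexification bound, the fact that the ridge construction does not inflate $\omega_m$, and the choice of $A$ on the face $\{\phi=\min_K\phi\}$, which guarantees the bad point is seen by the chord $S$ and makes the restriction inequality $\breve E_{m-1}(f;K)\ge\breve E_{m-1}(g;J)$ effective.
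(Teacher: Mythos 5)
Your proof is correct, but it takes a genuinely different route from the paper's. Both arguments share the same lifting mechanism: a ridge function $f=g\circ\phi$ built from a univariate convex function $g$, together with the observation that the restriction of a polynomial in $\P_{m-1,n}\cap\C(K)$ to a chord of $K$ is a univariate convex polynomial of degree $\le m-1$, so that both the modulus and the convex-approximation error reduce to dimension one. The difference lies in how the univariate counterexample is produced. The paper normalizes $K$ so that its first-coordinate projection is $[0,1]$ and then simply invokes Shvedov's theorem \cite{Sh-orders}*{Th.~3}, which supplies $g\in\C([0,1])$ with $\breve E_{m-1}(g;[0,1])$ arbitrarily large while $\omega_4(g;[0,1])=1$, finishing with the standard inequality $\omega_m\le 2^{m-4}\omega_4$. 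You instead construct the univariate example from scratch: the barely non-convex cubic $Q_\eps$ with $Q_\eps''(t)=(t-a)-2\eps$ is convexified by a tangent-line splice at cost $O(\eps^3)$, yet stays at distance $\gtrsim\eps$ from every polynomial of degree $\le m-1$ that is convex on $J$, because $P''(a)\ge0$ combined with Markov's inequality forces $\|Q_\eps-P\|_J\ge 2\eps/M$; the hypothesis $m\ge4$ enters exactly where it must, namely in $\Delta^m_h Q_\eps\equiv0$, which gives $\omega_m(g;I)=O(\eps^3)$ and hence a ratio $\gtrsim\eps^{-2}$. What each approach buys: yours is self-contained (no appeal to Shvedov's result) and quantitative, exhibiting an explicit divergence rate; the paper's is a two-line reduction to a known sharp theorem.

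One small point you should patch: you assert, without justification, the existence of a chord $[A,B]\subset K$ parallel to $e$ whose endpoint $A$ attains $\min_K\langle\cdot,e\rangle$, and since $e$ is defined through $B-A$ the definition looks circular. The claim is true --- for instance, take $[A,B]$ realizing the diameter of $K$: then $|x-B|\le|A-B|$ for all $x\in K$ gives $\langle x-A,\,B-A\rangle\ge\tfrac12|x-A|^2\ge0$, so $A$ minimizes $\langle\cdot,e\rangle$ --- but it needs this line of proof. Alternatively, you can drop the parallelism requirement entirely: any chord $[A,B]$ with $A$ in the face $\{\phi=\min_K\phi\}$ and $\phi(B)>\phi(A)$ can be affinely parametrized by $\phi$, and your restriction argument goes through verbatim.
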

	The proof readily follows from the one-dimensional version due to Shvedov~\cite{Sh-orders}*{Th.~3}.
	
	Thus, we are left with the study of $\cpw_{3}(K)$. 
	
	In the one-dimensional case, one can observe that if $f\in \C(I)$ for a segment $I\subset\R$, then the best approximating quadratic is automatically convex, i.e. $\breve E_2(f;I)=E_2(f;I)$. Indeed, if $P\in \P_{2,1}$ is the best approximant to $f$, i.e., $\|f-P\|_{I}=E_2(f;I)$, then $P''$ is constant and either $P''\ge0$ in which case $P\in\C(I)$ so there is nothing to prove, or $P''<0$ and then $f-P$ is strictly convex on $I$ and by the properties of convex functions one can find a linear function $L$ which is between $f$ and $P$ on $I$, so $L$ would satisfy $\|f-L\|_I<\|f-P\|_I$, contradiction.  So, for any segment $I\subset\R$, 
	\begin{equation}\label{eqn:cpw3 segment}
		\cpw_{3}(I)=\cw_3(I)\le w_3(I)\le 1,
	\end{equation}
	where the first inequality uses~\eqref{eqn:obvious relation between whitneys} and the last inequality was obtained by Kryakin in~\cite{Kr}.
	
	For the multivariate case, we will show how any quadratic approximating polynomial to a convex function may be modified to become convex quadratic polynomial so that the error of approximation increases by at most an extra constant factor that depends on the geometry of the domain, namely, on how far the domain is from being an ellipsoid. 
	\begin{theorem}
		\label{thm:fixing quadratic to be convex}
		For any $K\in\K^n$, $f\in\C(K)$ and $P\in\P_{2,n}$ then there exists $Q\in\P_{2,n}\cap \C(K)$ such that 
		\[
		\|f-Q\|_K \le a(K) \|f-P\|_K, \quad\text{where}\quad a(K):=2(d(K))^2
		\]
		and $d(K)$ is the Banach-Mazur distance between $K$ and the unit ball $B_2^n$ of $\R^n$ defined as
		\[
		d(K):=\inf_{A\in\mathcal{M}_{n\times n}(\R):\det A\ne0,\,b\in\R^n}\{\lambda:B_2^n\subset AK+b\subset \lambda B_2^n\}.
		\]
		In particular, for any $K\in\K^n$, $f\in\C(K)$, we have 
		\[
		\breve E_2(f;K) \le a(K) E_2(f;K)\quad\text{and}\quad \cpw_3(K)\le a(K) \cw_3(K).
		\]
	\end{theorem}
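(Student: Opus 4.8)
The plan is to reduce the problem to the case where $K$ is in a normalized position relative to the Euclidean ball, exploiting the affine invariance of the three quantities involved. First I would observe that convexity, polynomial degree, and the approximation error $\|f-P\|_K$ all behave naturally under affine substitution: if $T(x)=Ax+b$ is invertible and we set $\tilde f = f\circ T$, $\tilde P = P\circ T$, $\tilde K = T^{-1}K$, then $\tilde f$ is convex on $\tilde K$, $\tilde P\in\P_{2,n}$, and $\|\tilde f - \tilde P\|_{\tilde K}=\|f-P\|_K$; moreover $\tilde P$ is convex on $\tilde K$ iff $P$ is convex on $K$. Since $d(K)$ is defined as an infimum over affine maps, I may assume after such a change of variables that $B_2^n\subset K\subset \lambda B_2^n$ with $\lambda$ arbitrarily close to $d(K)$.

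The core construction is the following. Given the (not necessarily convex) quadratic approximant $P$, write $P(x)=\frac12 x^\top H x + \ell(x)$ where $H$ is the symmetric Hessian matrix and $\ell$ is affine. The idea suggested by the theorem statement is to correct the indefinite part of $H$ by adding a convex quadratic. Set $r:=\|f-P\|_K$, so that $f\ge P-r$ on $K$ and in particular $P-r$ lies below the convex function $f$. I would take $Q(x):=P(x)+c\,|x|^2$ for a suitable $c\ge 0$ chosen just large enough that $H+2cI\succeq 0$, which forces $Q$ to be convex on all of $\R^n$, hence on $K$. The two things to control are (i) how large $c$ must be, governed by the most negative eigenvalue of $H$, and (ii) how much the added term $c|x|^2$ inflates the approximation error on $K$, which since $|x|\le\lambda$ on $K$ is at most $c\lambda^2$.

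The key quantitative step, and what I expect to be the main obstacle, is bounding the most negative eigenvalue of $H$ in terms of $r$ and the inradius. Here convexity of $f$ is essential: since $f$ is convex and $\|f-P\|_K\le r$, the second-order increments of $P$ cannot be too negative without violating the lower bound $P\ge f-r\ge (\text{affine support of }f)-r$ on a ball of radius $1$ contained in $K$. Concretely, fix a unit eigenvector $u$ of $H$ with eigenvalue $\mu:=\lambda_{\min}(H)$, and fix a center $x_0$ with $x_0\pm u\in B_2^n\subset K$. Using a supporting affine function $L$ of $f$ at $x_0$ (so $f\ge L$ with $f(x_0)=L(x_0)$, and $L$ linear) together with the three evaluations at $x_0-u$, $x_0$, $x_0+u$, the discrete second difference of $P$ along $u$ equals $\mu$, while the same second difference of the affine $L$ vanishes; comparing via $|f-P|\le r$ and $f\ge L$ yields $\mu\ge -\,(\text{const})\,r$. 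A careful bookkeeping of this comparison should give exactly $\mu\ge -2r$, so that $c=r$ suffices to make $H+2cI\succeq 0$.

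Combining the pieces, $Q=P+r|x|^2$ is convex, and on $K$ we have $\|f-Q\|_K\le \|f-P\|_K + r\,\|\,|x|^2\|_K \le r + r\lambda^2 \le 2r\lambda^2$ (using $\lambda\ge 1$), which after passing to the infimum over affine maps gives $\|f-Q\|_K\le 2(d(K))^2\|f-P\|_K = a(K)\|f-P\|_K$, establishing the main inequality. The two consequences are then immediate: taking $P$ to be a near-best (unrestricted) quadratic approximant of $f$ yields $\breve E_2(f;K)\le a(K)E_2(f;K)$ by letting $\|f-P\|_K\to E_2(f;K)$, and applying this to an arbitrary $f\in\C(K)$ with $\omega_3(f;K)\le 1$ together with the definitions~\eqref{eqn:def whitney for convex} and~\eqref{eqn:def convexity pres whitney} yields $\cpw_3(K)\le a(K)\cw_3(K)$.
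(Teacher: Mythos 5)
Your overall strategy --- normalize so that $B_2^n\subset K\subset\lambda B_2^n$ and then add a convex quadratic correction to $P$ --- is essentially the paper's, but the quantitative claim on which your constant rests is false. Writing $P(x)=\frac12 x^\intercal Hx+\ell(x)$ and $\mu:=\lambda_{\min}(H)$, the three-point comparison you describe gives only
\[
\mu=\Delta^2_u(P;x_0-u)\ge \Delta^2_u(f;x_0-u)-4r\ge -4r,
\]
since the three evaluations of $P$ get replaced by values of $f$ with errors adding up to $(1+2+1)r=4r$; the supporting affine function of $f$ at $x_0$ contributes nothing beyond $\Delta^2_u(f;x_0-u)\ge0$. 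Moreover $-4r$ is sharp, so no ``careful bookkeeping'' can yield $\mu\ge-2r$: take $n=1$, $K=[-1,1]=B_2^1$, $f\equiv0$, $P(x)=r(1-2x^2)$; then $\|f-P\|_K=r$ while $P''=-4r$. Consequently you must take $c=2r$ to make $H+2cI\succeq0$, and your final estimate degrades to $\|f-Q\|_K\le r+2r\lambda^2\le 3r\lambda^2$, i.e.\ $a(K)=3(d(K))^2$, which does not prove the theorem as stated.

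The missing idea is a centering constant, which is exactly how the paper gets the factor $2$. The paper's $Q$ zeroes out the negative eigenvalues of the quadratic part \emph{and subtracts the constant} $\|f-P\|_{B_2^n}$; this turns the one-sided error band of the correction on $B_2^n$ (which lies in $[-2r_B,0]$, $r_B:=\|f-P\|_{B_2^n}$) into the symmetric band $[-r_B,r_B]$, and then homogeneity of degree $2$ of $P-Q-r_B$ propagates the bound to $\lambda B_2^n$ with factor $(2\lambda^2-1)$ instead of $2\lambda^2$. Your simpler correction can be repaired the same way: set
\[
Q(x):=P(x)+2r_B|x|^2-r_B .
\]
Convexity holds because the Hessian shift is $4r_B\ge-\mu$; on $B_2^n$ one has $Q-P=2r_B|x|^2-r_B\in[-r_B,r_B]$; and on $\lambda B_2^n$, $|Q-P|\le(2\lambda^2-1)r_B$. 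Hence
\[
\|f-Q\|_K\le\|f-P\|_K+(2\lambda^2-1)r_B\le 2\lambda^2\|f-P\|_K,
\]
recovering $a(K)=2(d(K))^2$. (Whether one lifts only the negative eigenvalues, as the paper does, or the whole spectrum, as you do, is immaterial for the constant; the centering is what matters.) Your reductions at the start and the derivation of the two consequences at the end are fine.
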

	Recall that $\cw_3(K)\le w_3(K)$ by~\eqref{eqn:obvious relation between whitneys}, so any upper bound on either of $\cw_3(K)$ or $w_3(K)$ yields an upper bound on $\cpw_3(K)$ with the extra factor of $a(K)$, which, even for general convex domain $K$ can be shown to grow at most as $O(n^2)$. Namely, using the known results on the Banach-Mazur distance (which are corollaries of John's characterization of inscribed ellipsoid of largest volume) and estimates on Whitney constants from~\cite{BrKa}, we obtain the following corollary.
	\begin{corollary}
	\label{cor:specific for quadratics} There exists $c>0$ such that for all $n$ we have
	\begin{align}
		\cpw_3(B^n_2)&\le c \log(n+1), \label{eqn:cor-ball} \\
		\cpw_3(K)&\le 2 n w_3(K) \le c n^{3/2} \log(n+1) \quad\text{for any centrally} \label{eqn:cor-sym} \\ & \hskip6cm\nonumber\text{symmetric }K\in\K^n,  \\
		\cpw_3(K)&\le 2n^2 w_3(K) \quad\text{for any }K\in\K^n. \label{eqn:cor-gen}
	\end{align}
	
\end{corollary}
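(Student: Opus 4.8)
The plan is to deduce all three estimates from \cref{thm:fixing quadratic to be convex} by feeding in, on the one hand, standard bounds on the Banach--Mazur distance $d(K)$ coming from John's ellipsoid theorem, and, on the other hand, the bounds on the ordinary Whitney constant $w_3$ available in~\cite{BrKa}.

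First I would control the factor $a(K)=2(d(K))^2$ in each of the three cases. For $K=B_2^n$ the choice $A=I$, $b=0$ gives $B_2^n\subseteq AK+b\subseteq 1\cdot B_2^n$, so $d(B_2^n)=1$ and $a(B_2^n)=2$. For a general convex body I would invoke John's theorem: after the affine map sending the maximal-volume inscribed ellipsoid to $B_2^n$, the transformed body $AK+b$ satisfies $B_2^n\subseteq AK+b\subseteq nB_2^n$, whence $d(K)\le n$ and $a(K)\le 2n^2$. When $K$ is centrally symmetric the same normalization yields the sharper inclusion $AK+b\subseteq\sqrt{n}\,B_2^n$, so $d(K)\le\sqrt{n}$ and $a(K)\le 2n$. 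Since the definition of $d(K)$ takes an infimum over all invertible affine maps $A$ and translations $b$, the particular choice furnished by John's theorem yields an upper bound on $d(K)$.

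Next, combining \cref{thm:fixing quadratic to be convex} with the elementary inequality $\cw_3(K)\le w_3(K)$ from~\eqref{eqn:obvious relation between whitneys}, I would write
\[
\cpw_3(K)\le a(K)\,\cw_3(K)\le a(K)\,w_3(K).
\]
Substituting $a(K)\le 2n^2$ gives~\eqref{eqn:cor-gen} at once, and substituting $a(K)\le 2n$ for symmetric $K$ gives the first inequality of~\eqref{eqn:cor-sym}. It then remains only to insert the two external Whitney bounds from~\cite{BrKa}: the estimate $w_3(B_2^n)\le c\log(n+1)$ for the Euclidean ball produces~\eqref{eqn:cor-ball} (with $a(B_2^n)=2$ absorbed into $c$), and the estimate $w_{3,n}^{(s)}\le c\,n^{1/2}\log(n+1)$ for symmetric bodies---the $m=3$ instance of the known upper bound $w_{m,n}^{(s)}\le c\,n^{m/2-1}\log(n+1)$---completes~\eqref{eqn:cor-sym}.

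Because the analytic core is entirely carried by \cref{thm:fixing quadratic to be convex}, there is no genuine difficulty in the corollary itself. The only steps demanding attention are invoking John's theorem in the correct (affine-invariant) form and quoting the precise forms of the two Whitney-constant bounds from~\cite{BrKa}; in particular one should confirm that the ball admits the logarithmic bound $w_3(B_2^n)\le c\log(n+1)$ rather than merely the $n^{1/2}\log(n+1)$ bound valid for arbitrary symmetric domains.
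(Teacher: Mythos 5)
Your proposal is correct and follows essentially the same route as the paper: apply \cref{thm:fixing quadratic to be convex} together with $\cw_3(K)\le w_3(K)$, bound $d(K)$ via John's theorem ($d(B_2^n)=1$, $d(K)\le\sqrt{n}$ for symmetric $K$, $d(K)\le n$ in general), and then quote the Whitney-constant estimates from~\cite{BrKa} (the logarithmic bound for the Euclidean ball and the $n^{1/2}\log(n+1)$ bound for centrally symmetric bodies). The only difference is bibliographic: the paper cites the precise sources, namely \cite{BrKa}*{Th.~4.3(a)} with $p=2$ for the ball and \cite{BrKa}*{Th.~4.1} for the symmetric case.
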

	
	We note that for all dimensions $n\ge 2$, generally speaking, there is no uniqueness of best uniform approximating polynomial of degree $2$, see, e.g.~\cite{Sha}*{Ch.~2}. We show that unlike in the one-dimensional case, in several variables the set of best approximating quadratics to a convex function may contain a \emph{non-convex} quadratic.
	\begin{proposition}\label{prop:example}
		For $(x,y)\in[-1,1]\times[0,1]=:K$, define
		\[
		f(x,y):=2\max\{1-y,|x|\}.
		\]
		Then $f\in \C(K)$ and $E_2(f;K)=\frac 12$. Denote $\EE:=\{R\in\P_{2,2}:\|f-R\|_K=E_2(f;K)\}$ to be the set of best quadratic approximations to $f$ on $K$. Then: \\
		(i)
		$
		P(x,y):=\frac32+x^2-y^2\in \EE
		$
		and $P\notin \C(K)$;\\
		(ii) $Q(x,y):=\frac32+x^2+y^2-2y\in\EE$ and $Q\in \C(K)$.
	\end{proposition}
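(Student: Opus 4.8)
The plan is to establish the two facts $f\in\C(K)$ and $E_2(f;K)=\tfrac12$ first, after which (i) and (ii) reduce to short verifications. Convexity of $f$ is immediate: both $(x,y)\mapsto 1-y$ and $(x,y)\mapsto|x|$ are convex on $\R^2$, the pointwise maximum of convex functions is convex, and scaling by $2$ preserves this, so $f\in\C(K)$. For the value of $E_2$ I would prove the matching bounds $E_2(f;K)\le\tfrac12$ and $E_2(f;K)\ge\tfrac12$ separately, and then read off (i), (ii) from the fact that both $P$ and $Q$ realize this error.

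\emph{Upper bound.} It suffices to check directly that $\|f-P\|_K\le\tfrac12$, with the analogous computation giving $\|f-Q\|_K\le\tfrac12$. I would split $K$ into the two regions $R_1=\{|x|\ge 1-y\}$ and $R_2=\{|x|\le 1-y\}$, on which $f$ equals $2|x|$ and $2(1-y)$ respectively. On each region $f$ is affine, so $f-P$ is a genuine quadratic, and completing the square turns the estimate into an elementary inequality: on $R_1$ one gets $f-P=-\tfrac12+\bigl(y^2-(1-|x|)^2\bigr)$ with $0\le y^2-(1-|x|)^2\le1$ because $0\le 1-|x|\le y\le1$ there, and symmetrically $f-P=-\tfrac12+\bigl((1-y)^2-x^2\bigr)$ on $R_2$. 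This yields $-\tfrac12\le f-P\le\tfrac12$ on all of $K$. The same two-region computation for $Q$ produces $f-Q=\tfrac12-(x^2+y^2)$ on $R_2$ and $f-Q=\tfrac12-\bigl((1-|x|)^2+(1-y)^2\bigr)$ on $R_1$, and the bounds follow from $x^2+y^2\le(|x|+y)^2\le1$ and $(1-|x|)^2+(1-y)^2\le\bigl((1-|x|)+(1-y)\bigr)^2\le1$ on the respective regions.

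\emph{Lower bound.} This is the heart of the argument, and the main obstacle is to locate the correct finite set of extremal nodes together with an annihilating combination. I would use a point-evaluation functional: if finitely many nodes $z_i\in K$ and reals $\alpha_i$ satisfy $\sum_i\alpha_i R(z_i)=0$ for every $R\in\P_{2,2}$, then for any such $R$,
\[
\Bigl|\sum_i\alpha_i f(z_i)\Bigr|=\Bigl|\sum_i\alpha_i\bigl(f(z_i)-R(z_i)\bigr)\Bigr|\le\Bigl(\sum_i|\alpha_i|\Bigr)\|f-R\|_K,
\]
so $E_2(f;K)\ge\bigl|\sum_i\alpha_i f(z_i)\bigr|/\sum_i|\alpha_i|$. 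Inspecting where $|f-P|$ and $|f-Q|$ attain $\tfrac12$ points to the six nodes $(0,0),(\pm1,1),(0,1),(\pm1,0)$; solving the annihilation conditions (using the symmetry $x\mapsto-x$ to discard the odd monomials $x,xy$ and matching $1,y,x^2,y^2$) produces the functional
\[
L(g):=2g(0,0)+g(1,1)+g(-1,1)-2g(0,1)-g(1,0)-g(-1,0).
\]
I would verify that $L$ annihilates each monomial $1,x,y,x^2,xy,y^2$, hence all of $\P_{2,2}$, and compute $L(f)=4$ and $\sum_i|\alpha_i|=8$ using $f(0,0)=f(\pm1,1)=f(\pm1,0)=2$ and $f(0,1)=0$. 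This gives $E_2(f;K)\ge 4/8=\tfrac12$.

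Combining the two bounds yields $E_2(f;K)=\tfrac12$. Since each polynomial of degree $\le2$ satisfies $\|f-R\|_K\ge E_2(f;K)=\tfrac12$, the upper-bound computation forces $\|f-P\|_K=\|f-Q\|_K=\tfrac12$, so $P,Q\in\EE$. Finally, the Hessian of $P$ is $\mathrm{diag}(2,-2)$, which is indefinite, so $P\notin\C(K)$, proving (i); the Hessian of $Q$ is $\mathrm{diag}(2,2)$, positive definite, so $Q\in\C(K)$, proving (ii).
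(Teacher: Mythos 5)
Your proof is correct, and while it rests on the same six nodes as the paper's argument, it is more self-contained and, in one respect, actually sounder. The paper establishes $P,Q\in\EE$ by citing a Kolmogorov-type sufficient condition (Shapiro, Th.~2.3.2): one exhibits extremal points of $f-R$ together with positive weights $c_i$ such that $\sum_i c_i(f(x_i)-R(x_i))S(x_i)=0$ for all $S\in\P_{2,2}$. It uses the nodes $\cR=\{-1,0,1\}\times\{0,1\}$, where $f-P=(-1)^{x+y}\tfrac12$, and claims the unit-weight identity $S(-1,1)+S(0,0)+S(1,1)-S(-1,0)-S(0,1)-S(1,0)=0$. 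You instead prove the two bounds separately: the upper bound $\|f-P\|_K,\|f-Q\|_K\le\tfrac12$ by an explicit region-wise computation (which the paper leaves as a ``straightforward multivariate calculus exercise''), and the lower bound $E_2(f;K)\ge\tfrac12$ by elementary duality against your annihilating functional $L$, requiring no external citation. Conceptually the two routes hinge on the same object---a signed measure supported on the six extremal nodes, annihilating $\P_{2,2}$, with signs matching those of $f-P$---but your weights are the correct ones: the paper's unit-weight identity is in fact false (test $S(x,y)=y$: it evaluates to $1+0+1-0-1-0=1\neq 0$), whereas solving the annihilation equations shows that, up to scale, the unique annihilating functional supported on these six nodes is exactly your $L(g)=2g(0,0)+g(1,1)+g(-1,1)-2g(0,1)-g(1,0)-g(-1,0)$. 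So your argument not only proves the proposition but also supplies the corrected weights ($c_i=2$ at $(0,0)$ and $(0,1)$, $c_i=1$ at the four corner nodes) that the paper's application of Shapiro's criterion requires.
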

	
	Since in our example there is also a \emph{convex} best approximating quadratic, one can still hope that for any convex function it is always possible to \emph{choose} a best approximating quadratic which is convex itself. 
	\begin{question}
		Is it true that for any $K\in\K^n$, $n\ge 2$, and for any $f\in\C(K)$, it is possible to choose $P\in\P_{2,n}\cap \C(K)$ such that $\|f-P\|=E_2(f;K)$?
	\end{question}
	The affirmative answer would imply that \cref{thm:fixing quadratic to be convex} is valid with $a(K)=1$. A more accessible question could be to find out if the statement of \cref{thm:fixing quadratic to be convex} is valid with $a(K)<c$ for a constant $c$ independent of $n$ and $K$.

	\section{Proofs}
	
	\subsection{Proof of \cref{thm:convex linear general}} It is possible to compute $E_1(f;K)$ for $K\in \K^n$ using the following result, which, in a certain sense, is a generalization of the Chebyshev alternation theorem to the multivariate case for approximation by linear polynomials. 
	\begin{proposition}[\cite{BrKa}*{Prop.~3.2}]
		\label{prop:E_1 computation general}
		For any $f\in C(K)$, $K\in\K^n$, we have
		\begin{equation}\label{eqn:E_1 comp general}
			E_1(f;K)=\frac12\max \left\{ \sum_{i=1}^la_i f(x_i)-\sum_{j=1}^m b_j f(y_j) \right\},
		\end{equation}
		where the maximum is taken over all positive integers $l$, $m$ with $l+m\le n+2$, all subsets $\{x_i\}$, $\{y_j\}$ of $K$ and nonnegative coefficients $\{a_i\}$, $\{b_j\}$ satisfying
		\begin{equation}\label{eqn:convex hulls intersect}
			\sum_{i=1}^la_i=\sum_{j=1}^m b_j=1 \quad\text{and}\quad
			\sum_{i=1}^la_ix_i=\sum_{j=1}^m b_jy_j.
		\end{equation}
	\end{proposition}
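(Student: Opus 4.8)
The plan is to establish the two inequalities separately: the lower bound $E_1(f;K)\ge\frac12(\cdots)$ is elementary and uses only the affine invariance built into \eqref{eqn:convex hulls intersect}, while the matching upper bound requires the convex-analytic duality for best approximation together with a dimension count that produces the cardinality restriction $l+m\le n+2$. First I would dispose of the easy direction. Fix any admissible configuration $\{a_i,x_i\}$, $\{b_j,y_j\}$ satisfying \eqref{eqn:convex hulls intersect} and any $P\in\P_{1,n}$. Since $P$ is affine and $\sum_i a_i=\sum_j b_j=1$, we have $\sum_i a_i P(x_i)=P\bigl(\sum_i a_i x_i\bigr)$ and $\sum_j b_j P(y_j)=P\bigl(\sum_j b_j y_j\bigr)$, and the common-barycenter condition $\sum_i a_i x_i=\sum_j b_j y_j$ forces these two quantities to coincide. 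Hence the $P$-contribution cancels and $\sum_i a_i f(x_i)-\sum_j b_j f(y_j)=\sum_i a_i (f-P)(x_i)-\sum_j b_j (f-P)(y_j)$, which is at most $\bigl(\sum_i a_i+\sum_j b_j\bigr)\|f-P\|_K=2\|f-P\|_K$. Taking the infimum over $P$ shows each admissible expression is $\le 2E_1(f;K)$, i.e.\ the right-hand side of \eqref{eqn:E_1 comp general} does not exceed $E_1(f;K)$.

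For the reverse inequality I would pass to the dual description of the error. With $V:=\P_{1,n}\subset C(K)$, the Hahn–Banach duality for the distance to a subspace gives $E_1(f;K)=\sup\{\langle\mu,f\rangle:\mu\in V^\perp,\ \|\mu\|\le1\}$, where by the Riesz representation theorem $\mu$ ranges over signed Radon measures on $K$ with the total-variation norm. The annihilator condition $\mu\in V^\perp$ reads exactly $\int_K 1\,d\mu=0$ and $\int_K x\,d\mu=0$. The feasible set is convex and weak-$*$ compact, so the linear functional $\mu\mapsto\langle\mu,f\rangle$ attains its maximum at an extreme point $\mu^*$, and one may assume $\|\mu^*\|=1$ (otherwise $E_1(f;K)=0$, $f\in\P_{1,n}$, and both sides vanish). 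Splitting $\mu^*=\mu^+-\mu^-$ into positive and negative parts, the mass condition gives $\mu^+(K)=\mu^-(K)$, which with $\|\mu^*\|=1$ forces each to equal $\tfrac12$; normalizing $a:=2\mu^+$ and $b:=2\mu^-$ to probability measures, the first-moment condition becomes the common-barycenter condition, and $E_1(f;K)=\langle\mu^*,f\rangle=\frac12\bigl(\int f\,da-\int f\,db\bigr)$.

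The remaining and hardest step is to show $\mu^*$ may be taken supported on at most $n+2$ points, which yields the finite sums and the bound $l+m\le n+2$. I would argue by a support-reduction principle: if $\mu^*=\sum_{k=1}^{N}c_k\delta_{p_k}$ with all $c_k\ne0$ and $N\ge n+3$, consider perturbations $\mu^*+\varepsilon\rho$ with $\rho=\sum_k d_k\delta_{p_k}$ on the same points. Requiring $\rho\in V^\perp$ imposes the $n+1$ conditions $\sum_k d_k=0$ and $\sum_k d_k p_k=0$, while keeping the total variation constant to first order imposes the single further condition $\sum_k\operatorname{sgn}(c_k)\,d_k=0$; these $n+2$ homogeneous equations in $N\ge n+3$ unknowns admit a nonzero solution $(d_k)$, so $\mu^*\pm\varepsilon\rho$ are feasible for small $\varepsilon$, contradicting extremality (equivalently, one increases $\varepsilon$ until a coefficient vanishes, lowering the support without decreasing $\langle\mu^*,f\rangle$). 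Thus $N\le n+2$, and relabeling the positive- and negative-mass atoms as $\{x_i\}$ and $\{y_j\}$ identifies the formula. The main obstacle I anticipate is making this reduction rigorous for an a priori \emph{not} finitely supported extreme measure: this needs either the fact that extreme points of such moment-constrained balls are automatically finitely supported, or a Carathéodory-type argument applied to the image of $K$ under $x\mapsto(1,x)\in\R^{n+1}$ to discretize $\mu^\pm$ before counting points. Once $N\le n+2$ is secured, combining the two inequalities gives equality and simultaneously the attainment asserted by the maximum in \eqref{eqn:E_1 comp general}.
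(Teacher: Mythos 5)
First, note that the paper does not prove this proposition at all: it is quoted verbatim from \cite{BrKa}*{Prop.~3.2}, so there is no in-paper argument to compare against. The standard proof (and the one the tools cited elsewhere in this paper point to, namely the characterization of best approximants from a finite-dimensional subspace as in \cite{Sha}*{Th.~2.3.2}) is anchored at a best approximant $P^*\in\P_{1,n}$: one extracts $r\le n+2$ extremal points and positive weights with $\sum_i c_i\,\mathrm{sgn}(f(x_i)-P^*(x_i))\,S(x_i)=0$ for all $S\in\P_{1,n}$, and testing against $S=1$ and the coordinate functions produces exactly the configuration in \eqref{eqn:convex hulls intersect}. Your route is genuinely different: you work entirely on the dual side, with extremal annihilating measures rather than with $P^*$. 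Your easy direction is complete and correct, and your duality setup (Riesz representation, $\mu\in V^\perp$ meaning $\int 1\,d\mu=\int x\,d\mu=0$, attainment on the weak-$*$ compact feasible set, norm $1$ when $E_1>0$, Jordan decomposition into two probability measures of mass $\tfrac12$ with common barycenter) is also correct.

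The genuine gap is the one you flag yourself: your support-reduction perturbation only makes sense for a measure that is \emph{already} finitely supported, since it perturbs coefficients of atoms, and a maximizer (even an extreme one, a priori) need not be atomic. As written, the proof is therefore incomplete precisely at the step that produces the bound $l+m\le n+2$, which is the whole content of the proposition beyond soft duality. Both repairs you name do work, but neither is executed, and one is mis-stated: the Carath\'eodory discretization must be applied to the lifted set $\{(x,f(x)):x\in K\}\subset\R^{n+1}$, i.e.\ to the map $x\mapsto(x,f(x))$, not to $x\mapsto(1,x)$ --- the lift must carry $f$, or else discretizing $\mu^{\pm}$ preserves the barycenter but destroys the value $\int f\,d\mu^{\pm}$. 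With the correct lift, Carath\'eodory gives an admissible finitely supported configuration with $l,m\le n+2$ each (so $l+m\le 2n+4$) and the same value $E_1$, after which your atom-counting perturbation ($n+2$ homogeneous conditions versus the number of atoms) legitimately reduces to $l+m\le n+2$; alternatively, the classical fact that extreme points of $\{\mu\in V^{\perp}:\|\mu\|\le 1\}$, $\dim V=n+1$, are supported on at most $n+2$ points is proved by the partition version of your own argument (split $\mathrm{supp}\,|\mu|$ into $n+3$ sets of positive mass and perturb $\mu$ by scaling on each piece). Either completion makes your proof correct, but you must actually carry one of them out rather than cite it as an anticipated obstacle.
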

	When $f$ is convex, the computation is easier, and it suffices to take $l=n+1$ and $m=1$.
	\begin{corollary}
		\label{cor:E_1 computation convex}
		For any $f\in \C(K)$, $K\in\K^n$, we have
		\begin{equation}\label{eqn:E_1 comp convex}
			E_1(f;K)=\frac12\max \left\{ \sum_{i=1}^{n+1}a_i f(x_i)-f \left( \sum_{i=1}^{n+1} a_i x_i \right) \right\},
		\end{equation}
		where the maximum is taken over all subsets $\{x_i\}$ of $K$ and nonnegative coefficients $\{a_i\}$ satisfying $\sum_{i=1}^{n+1}a_i=1$.
	\end{corollary}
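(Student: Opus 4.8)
The plan is to derive \cref{cor:E_1 computation convex} directly from \cref{prop:E_1 computation general} by exploiting the convexity of $f$ to collapse all of the subtracted points $y_j$ into a single one. Given any admissible configuration in \eqref{eqn:E_1 comp general}, the constraints \eqref{eqn:convex hulls intersect} say precisely that $\mu:=\sum_{i=1}^l a_i\delta_{x_i}$ and $\nu:=\sum_{j=1}^m b_j\delta_{y_j}$ are probability measures supported in $K$ sharing a common barycenter
\[
z:=\sum_{i=1}^l a_i x_i=\sum_{j=1}^m b_j y_j,
\]
and, since $K$ is convex, $z\in K$. I would read the quantity being maximized as $\int_K f\,d\mu-\int_K f\,d\nu$ and observe that the $\mu$-part should be left free while the $\nu$-part ought to be as concentrated as possible.

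The key step is a single application of Jensen's inequality: because $f\in\C(K)$ is convex (continuity upgrades the midpoint convexity in the definition to the full Jensen inequality) and $\nu$ is a probability measure on $K$ with barycenter $z$, we have
\[
\sum_{j=1}^m b_j f(y_j)\ge f(z)=f\Bigl(\sum_{i=1}^l a_i x_i\Bigr),
\]
with equality when $\nu=\delta_z$. Hence for every admissible configuration
\[
\sum_{i=1}^l a_i f(x_i)-\sum_{j=1}^m b_j f(y_j)\le \sum_{i=1}^l a_i f(x_i)-f\Bigl(\sum_{i=1}^l a_i x_i\Bigr),
\]
so the maximum in \eqref{eqn:E_1 comp general} is at most the right-hand side of \eqref{eqn:E_1 comp convex}. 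For the reverse inequality, any choice of $\{x_i\}$ and nonnegative $\{a_i\}$ with $\sum_i a_i=1$ becomes admissible for \eqref{eqn:E_1 comp general} upon setting $m=1$, $b_1=1$, and $y_1:=z=\sum_i a_i x_i\in K$; the constraints \eqref{eqn:convex hulls intersect} then hold by construction and the two expressions agree. Combining the two bounds and dividing by $2$ yields \eqref{eqn:E_1 comp convex}.

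The only point requiring care is the bookkeeping of the number of points. In \cref{prop:E_1 computation general} an admissible configuration satisfies $l+m\le n+2$; since $m\ge1$, after the Jensen reduction the $\mu$-part uses $l\le n+1$ points, and I would pad $\{x_i\}$ to exactly $n+1$ points by adjoining arbitrary points of $K$ with zero coefficients, which alters neither the value nor the barycenter. In the reverse direction the configuration $(l,m)=(n+1,1)$ obeys $l+m=n+2\le n+2$, so it is genuinely admissible. I expect this indexing check, together with the observation that $z\in K$ so that $\delta_z$ is a legitimate probability measure on $K$, to be the only mildly technical part; the entire substance of the argument is the one use of Jensen's inequality above.
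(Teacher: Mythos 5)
Your proposal is correct and follows essentially the same route as the paper: both directions rest on a single application of Jensen's inequality to collapse the $\{y_j\}$ side into the point $\sum_i a_i x_i$, with the same zero-coefficient padding to reach $n+1$ points and the same observation that the configuration $(l,m)=(n+1,1)$ is admissible since $l+m=n+2$. The measure-theoretic phrasing (probability measures with a common barycenter) is only a cosmetic repackaging of the paper's argument.
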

	\begin{proof}
		Clearly, the right-hand side of~\eqref{eqn:E_1 comp general} is greater than or equal to the right-hand side of~\eqref{eqn:E_1 comp convex}. Taking arbitrary $\{x_i\}$, $\{y_j\}$, $\{a_i\}$, $\{b_j\}$ satisfying~\eqref{eqn:convex hulls intersect} and using Jensen's inequality, we have
		\begin{align*}
	\sum_{i=1}^la_i f(x_i)-\sum_{j=1}^m b_j f(y_j)  & \le \sum_{i=1}^la_i f(x_i)-f \left(\sum_{j=1}^m b_j y_j\right) \\ &= \sum_{i=1}^la_i f(x_i)-f \left(\sum_{i=1}^l a_i x_i\right),
		\end{align*}
		so the inequality in the other direction follows (one can take $a_i=0$ with arbitrary $x_i\in K$ for $l+1\le i\le n+1$).
	\end{proof}
	Following~\cite{BrKa}*{Sect.~3}, we denote	for $f\in C(K)$, $K\in\K^n$
	\begin{equation*}
	\delta_{n+1}(f):=\max\left|\sum_{i=1}^{n+1}a_i f(x_i)-f \left( \sum_{i=1}^{n+1} a_i x_i \right)\right|,
	\end{equation*}
	where the maximum is taken over all subsets $\{x_i\}$ of $K$ and nonnegative coefficients $\{a_i\}$ satisfying $\sum_{i=1}^{n+1}a_i=1$, and
	\[
	\alpha_n(K):=\sup\{\delta_{n+1}(f):f\in C(K),\ \omega_2(f;K)\le 1\}.
	\]
	It is straightforward to observe that $\alpha_n(K)\le \sup_{S}\alpha_n(S)$, where the supremum is taken over all $n$-dimensional simplexes $S\subset K$, i.e., $S$ which are convex hulls of $n+1$ points in $K$ with interior of $S$ being nonempty. Denote by $S^n$ some fixed $n$-dimensional simplex. Now any $n$-dimensional simplex $S$ can be mapped into $S^n$ by a nonsingular affine transform, implying $\alpha_n(S)=\alpha_n(S^n)$. In summary, $\alpha_n(K)\le \alpha_n(S^n)=:\beta_n$, therefore, by~\cref{cor:E_1 computation convex}, 
	\begin{equation}\label{eqn:cw2 bound through beta}
		\cw_{2,n}\le\frac12\beta_n.
	\end{equation}
	(One can compare with~\cite{BrKa}*{Eqn.~(3.2)} for the corresponding usual Whitney constant.) By~\cite{BrKa}*{Lemma~3.4}, $\beta_{2r}\le \beta_r+\frac12$ for all positive integers $r$. As $\beta_1=1$, this implies $\beta_{2^r}\le \frac12r+1$. Taking $r=[\log_2n]+1$ we obtain from~\eqref{eqn:cw2 bound through beta} the upper bound in \cref{thm:convex linear general}.
	
	The proof of the lower bound in \cref{thm:convex linear general} is essentially given on~\cite{BrKa}*{p.~177} as the function $f_n$ defined by~\cite{BrKa}*{Eqn.~(3.4)} is shown there to be convex and satisfy $\omega_2(f_n;S^n)\le 1$ and $E_1(f_n;S^n)\ge\frac14\log_2(n+1)$. Here we will restrict ourselves only to repeating the definition of $f_n$. We can consider $S^n$ as a subset of $\R^{n+1}$ consisting of $x=(x_1,\dots,x_{n+1})$ such that each coordinate is nonnegative and $\sum_{k=1}^{n+1}x_k=1$. Then $f_n\in C(S^n)$ is defined by
	\[
	f_n(x)=\frac12\sum_{k=1}^{n+1}x_k\log_2 x_k.
	\]
	Now we are done with the proof of \cref{thm:convex linear general}.
	
	\subsection{Proof of \cref{thm:convex linear symmetric}} Suppose $K\in\K^n$, $K=-K$. 
	
	We begin with showing that $\cw_2(K)\le \frac12$. We need certain preliminaries from the theory of multivariate convex functions. Suppose $U\subset\R^n$ and $f:U\to\R$ is convex. $f$ is said to have support at $x_0\in U$ if there exists $l\in\P_{1,n}$ such that $l(x_0)=f(x_0)$ while $l(x)\le f(x)$ for any $x\in U$. If $U$ is open, then $f$ has support at any point in $U$, see, e.g.~\cite{RoVa}*{Th.~B, p.~108}. Consequently, any convex and continuous function on $K\in\K^n$ has support at any interior point of $K$. Now let $f\in \C(K)$ be arbitrary. By the above, $f$ has support at $0$, so we can find $l\in\P_{1,n}$ satisfying $l(0)=f(0)$ and $l(x)\le f(x)$ for any $x\in K$. Set $g(x):=f(x)-l(x)$, $x\in K$. Then $g$ is convex nonnegative on $K$ function with $g(0)=0$, $E_1(g;K)=E_1(f;K)$ and $\omega_2(g;K)=\omega_2(f;K)$. Therefore, it suffices to show that $E_1(g;K)\le \frac12 \omega_2(g;K)$. Indeed, define $P(x):=\frac12\|g\|_K$. Then
	\begin{equation}\label{eqn:E1 through g}
		E_1(g;K)\le \|g-P\|_K=\frac12 \|g\|_K.
	\end{equation}
	Let $x^*\in K$ be a point such that $\|g\|_K=g(x^*)$. Then $-x^*\in K$ and
	\begin{align} \nonumber
	\omega_2(g;K) & \ge |\Delta^2_{x^*}(g;-x^*)|=g(x^*)-2g(0)+g(-x^*)=g(x^*)+g(-x^*)\\&\ge g(x^*)=\|g\|_K. \label{eqn:omega2 from below}
\end{align}
	Now~\eqref{eqn:E1 through g} and~\eqref{eqn:omega2 from below} imply $E_1(g;K)\le \frac12 \omega_2(g;K)$, and so $\cw_2(K)\le \frac12$.
	
	For any $\eps>0$ we will show that $\cw_2(K)>\frac12-\eps$. Since $\cw_2(K)$ does not change if we apply a rotation and/or a dilation to $K$, we can assume, without loss of generality, that the projection of $K$ onto the first coordinate axis is precisely the segment $[-1,1]$. For $\delta\in(0,1)$ which will be selected later, let 
	\[
	f_\delta(x_1,\dots,x_n)=\max\left\{0,\frac{x_1-1+\delta}{\delta}\right\}=:g_\delta(x_1).
	\]
	Then obviously $f_\delta\in \C(K)$. Since $f_\delta$ depends only on the first variable, due to our choice of the position of $K$, it is not hard to see that $E_1(f_\delta;K)=E_1(g_\delta;[-1,1])$ and $\omega_2(f_\delta;K)=\omega_2(g_\delta;[-1,1])$. The Chebyshev alternation theorem (e.g.~\cite{DeLo}*{Th.~5.1, p.~74}) implies that $p(x)=\frac x2 + \frac \delta4$ is the best uniform approximation to $g_\delta$ on $[-1,1]$ and $E_1(g_\delta;[-1,1])=\frac12-\frac\delta4$. On the other hand, due to convexity of $g_\delta$ (or by direct verification), we have $\omega_2(g_\delta;[-1,1])=\Delta^2_1(g;-1)=1$. Thus, \[\cw_2(K)\ge \frac{E_1(g_\delta;[-1,1])}{\omega_2(g_\delta;[-1,1])}\ge \frac12-\frac\delta4\] and taking $\delta<4\eps$ completes the proof.
	
	\subsection{Proof of \cref{thm:convex by at least quadratic}} Let $\eps>0$ be arbitrary fixed. Then one can find $f\in C(K)$ with 
	\begin{equation}\label{eqn:w_m def}
		E_{m-1}(f;K)\ge w_m(K)-\eps \quad\text{and}\quad \omega_m(f;K)=1.
	\end{equation}
	By the Weierstrass approximation theorem, there exists an algebraic polynomial $g$ such that $\|f-g\|_{K}\le \frac{\eps}{2^m}$. Then~\eqref{eqn:w_m def} implies
	\begin{equation} \label{eqn:error g lower}
		E_{m-1}(g;K)  \ge E_{m-1}(f;K)-E_{m-1}(f-g;K)
		\ge w_m(K)-\eps-\frac{\eps}{2^m}>w_m(K)-2\eps 
	\end{equation}
	and
	\begin{equation}\label{eqn:mod g upper}
		\omega_m(g;K)\le \omega_m(f;K)+\omega_m(g-f;K)\le 1+2^m\|f-g\|_K\le 1+ \eps.
	\end{equation}
	Now we consider $h(x):=g(x)+L\|x\|^2$, where $\|x\|$ is the Euclidean norm of $x\in\R^n$. We claim that with sufficiently large $L>0$ the resulting function $h$ will be convex on $K$. Indeed, it suffices to ensure that $h$ is convex along any segment that belongs to $K$, which, in turn, holds true provided all the second directional derivatives of $h$ are non-negative, i.e.,
	\begin{equation}\label{eqn:second-der}
		\frac{\partial^2 }{\partial \xi^2}h(x)\ge 0 \quad\text{for any }x\in K\quad\text{and}\quad \xi\in\Sph^{n-1},
	\end{equation}
	where $\Sph^{n-1}$ is the unit sphere in $\R^n$. Thus, as $g$ is $C^2$ everywhere and $K$ is compact, we can define
	\[
	L:=\frac12 \max_{x\in K} \max_{\xi\in \Sph^{n-1}} \left|\frac{\partial^2 }{\partial \xi^2} g(x)\right|<\infty,
	\] 
	and~\eqref{eqn:second-der} holds, establishing that $h$ is convex on $K$. 
	Therefore, as $L\|x\|^2$ is a quadratic polynomial and $m\ge 3$, from~\eqref{eqn:error g lower} and~\eqref{eqn:mod g upper}, we conclude
	\[
	\cw_m(K)\ge \frac{E_{m-1}(h;K)}{\omega_m(h;K)}=\frac{E_{m-1}(g;K)}{\omega_m(g;K)} \ge \frac{w_m(K)-2\eps}{1+\eps}.
	\]
	This implies $\cw_m(K)\ge w_m(K)$ and the inequality in the other direction is given in~\eqref{eqn:obvious relation between whitneys}.
	
	\subsection{Proof of \cref{thm:negative convexity preserving}}
	Since $\cpw_m(K)$ is invariant under dilations and translations of $K$, we can assume that the projection of $K$ onto the first coordinate axis is exactly $[0,1]$. By~\cite{Sh-orders}*{Th.~3}, for any $A>0$ there exists $g\in \C([0,1])$ such that $\breve E_{m-1}(g;[0,1])>A$ while $\omega_4(g;[0,1])=1$. Defining $f(x_1,\dots,x_n)=g(x_1)$, we obtain
	\[
	\cpw_m(K)\ge \frac{E_{m-1}(f;K)}{\omega_m(f;K)} = \frac{E_{m-1}(g;[0,1])}{\omega_m(g;[0,1])} \ge \frac{E_{m-1}(g;[0,1])}{2^{m-4}\omega_4(g;[0,1])}>
	\frac{A}{2^{m-4}},
	\]
	implying the required $\cpw_m(K)=\infty$.
	
	\subsection{Proof of \cref{thm:fixing quadratic to be convex}} Using an affine change of variables if needed, by the definition of $d(K)$ we can assume that
	\begin{equation}\label{eqn:K position}
		B_2^n \subset K \subset d(K) B_2^n.
	\end{equation}
	We can write $P(x)=x^\intercal M x + L(x)$ for some symmetric $n\times n$ matrix $M$ and $L\in\P_{1,n}$. By standard linear algebra, there is an orthogonal matrix $O$ such that $M=ODO^{-1}$ for a diagonal matrix $D$. Thus, under the orthogonal change of variables $y=O^{-1}x$, we have $P(Oy)=y^\intercal Dy+L(Oy)$. By $D_+$ we denote the matrix obtained from $D$ by replacing all negative (diagonal) entries with zeroes. We define the required polynomial $Q$ as follows:
	\begin{equation}\label{eqn:Q def}
		Q(x):=(O^{-1}x)^\intercal D_+ (O^{-1}x) + L(x) - \|f-P\|_{B_2^n}, \quad x\in K.
	\end{equation}
	It is easy to see that $Q$ is convex: if $(d_i)_{i=1}^n$ are the diagonal entries of $D$, then
	\[
	y^\intercal D_+y=\sum_{i=1}^n \max\{d_i,0\} y_i^2,
	\]
	which is the sum of convex functions $y_i^2$ with nonnegative coefficients.
	
	First step in establishing the required bound on $\|f-Q\|_K$ is to show
	\begin{equation}\label{eqn:est on ball}
		\|P-Q\|_{B_2^n}\le \|f-P\|_{B_2^n}.
	\end{equation}
	For any $y\in B_2^n$ we have $y^\intercal D y \le y^\intercal D_+ y$, therefore,
	\begin{equation}\label{eqn:upper on f-Q}
		(P(Oy)-Q(Oy))\le \|f-P\|_{B_2^n}.
	\end{equation}
	Next we will obtain an estimate in the other direction. Observe that for any $y\in B_2^n$ we have $\pm Oy\in B_2^n$ since $O$ is orthogonal, so~\eqref{eqn:K position} implies $-Oy,0,Oy\in K$. Now, using convexity of $f$, an elementary upper bound on the second difference, and the representation $P(Oy)=y^\intercal Dy+L(Oy)$, we have
	\begin{equation*}
		0\le \Delta^2_{Oy}(f;-Oy)=\Delta^2_{Oy}(f-P;-Oy)+\Delta^2_{Oy}(P;-Oy)\le 4\|f-P\|_{B_2^n}+2y^\intercal Dy,
	\end{equation*}
	which implies
	\begin{equation}\label{eqn:aux bound}
		-2\|f-P\|_{B_2^n}\le y^\intercal Dy.
	\end{equation}
	For any $y\in B_2^n$ let us define
	\[
	y_-:=(u_1,\dots,u_n), \quad\text{where}\quad u_i=\begin{cases}
		y_i, & \text{if }d_i<0,\\
		0, & \text{otherwise}.
	\end{cases}
	\]
	Then $y_-^\intercal Dy_-=y^\intercal Dy-y^\intercal D_+y$, so from~\eqref{eqn:aux bound} for $y_-$ we obtain
	\[
	P(Oy)-Q(Oy)=y_-^\intercal Dy_-+\|f-P\|_{B_2^n}\ge-\|f-P\|_{B_2^n},
	\]
	hence, in combination with~\eqref{eqn:upper on f-Q}, we get the claimed~\eqref{eqn:est on ball}.
	
	Second, observing that $P(x)-Q(x)-\|f-P\|_{B_2^n}$ is homogeneous of degree $2$ function in $x$, we have for any $1\le \lambda\le d(K)$ and any $x\in B_2^n$ that
	\begin{align*}
		|P(\lambda x)-Q(\lambda x)| &=|\lambda^2(P(x)-Q(x))+(1-\lambda^2)\|f-P\|_{B_2^n}| \\
		&\le \lambda^2\|P-Q\|_{B_2^n} + (\lambda^2-1)\|f-P\|_{B_2^n} \\
		& \le (2\lambda^2-1) \|f-P\|_{B_2^n},
	\end{align*}
	where in the last step we used~\eqref{eqn:est on ball}.
	So, by~\eqref{eqn:K position},
	\begin{equation*}
		\|P-Q\|_K\le (2(d(K))^2-1) \|f-P\|_{B_2^n}.
	\end{equation*}
	Using the last inequality, we complete the proof as follows:
	\begin{align*}
		\|f-Q\|_K &\le \|f-P\|_K + \|P-Q\|_K \\
		& \le \|f-P\|_K+(2(d(K))^2-1) \|f-P\|_{B_2^n} \\
		& \le 2(d(K))^2\|f-P\|_K.
	\end{align*}
	
	\subsection{Proof of \cref{cor:specific for quadratics}} The first step in each of~\eqref{eqn:cor-ball}--\eqref{eqn:cor-gen} is an application of \cref{thm:fixing quadratic to be convex}. We have $d(B_2^n)=1$, so~\eqref{eqn:cor-ball} follows from~\cite{BrKa}*{Th.~4.3(a)} for $p=2$. John's characterization of inscribed ellipsoid of largest volume implies (see, e.g.~\cite{Schn}*{Th.~10.12.2, p.~588}) that $d(K)\le n$ for any $K\in\K^n$, while $d(K)\le \sqrt{n}$ if, in addition, $K$ is centrally symmetric. 
	This immediately yields~\eqref{eqn:cor-gen}, while for~\eqref{eqn:cor-sym} we invoke~\cite{BrKa}*{Th.~4.1}.
	
	\subsection{Proof of \cref{prop:example}} $f$ is convex as the maximum of three linear functions. Next, we need a sufficient condition for a polynomial from $\P_{2,2}$ to be in the set $\EE$ of polynomials of best approximation to $f$. It is known (see, e.g., \cite{Sha}*{Th.~2.3.2, p.~14}) that $R\in\EE$ if one can find a finite set $\cR=\{x_1,\dots,x_r\}\subset K$ and positive constants $c_1,\dots,c_r$ satisfying $|f(x)-R(x)|=\|f-R\|_K$ for any $x\in \cR$, and
	\begin{equation}\label{eqn:best approx condition}
		\sum_{i=1}^r c_i (f(x_i)-R(x_i)) S(x_i)=0 \quad\text{for any}\quad S\in \P_{2,2}.
	\end{equation}
	
	
	(i) $P$ is clearly not convex. It is a straightforward multivariate calculus exercise to show that $\|f-P\|_{K}=\frac12$ and with $\cR:=\{-1,0,1\}\times \{0,1\}$
	\[
	f(x,y)-P(x,y)=(-1)^{x+y}\frac12 \quad\text{for any }(x,y)\in\cR.
	\]
	Direct verification also shows that any $S\in\P_{2,2}$ satisfies
	\[
	S(-1,1)+S(0,0)+ S(1,1)-S(-1,0)-S(0,1)-S(1,0)=0,
	\]
	thus~\eqref{eqn:best approx condition} is satisfied for $P$ with $c_1=\dots=c_6=1$ establishing that $P\in\EE$.
	
	(ii) $Q$ is obviously convex. One can observe that the values of $Q$ and $P$ on $\cR$ coincide. The rest of the proof follows the same lines as that for the part (i).
	
	{\bf Acknowledgment.} The authors thank Andr\'as Kro\'o for making them aware of~\cite{Sha}*{Th.~2.3.2} which allowed to simplify the proof of~\cref{prop:example}, as well as the referee for the useful comments.
	
	\begin{bibdiv}
		\begin{biblist}

			\bib{BrKa}{article}{
				author={Brudnyi, Y. A.},
				author={Kalton, N. J.},
				title={Polynomial approximation on convex subsets of ${\bf R}^n$},
				journal={Constr. Approx.},
				volume={16},
				date={2000},
				number={2},
				pages={161--199},
			}

			\bib{DaPr}{article}{
				author={Dai, Feng},
				author={Prymak, Andriy},
				title={On directional Whitney inequality},
				journal={Canad. J. Math.},
				volume={74},
				date={2022},
				number={3},
				pages={833--857},
			}			
			
			\bib{DeLe}{article}{
				author={Dekel, S.},
				author={Leviatan, D.},
				title={Whitney estimates for convex domains with applications to
					multivariate piecewise polynomial approximation},
				journal={Found. Comput. Math.},
				volume={4},
				date={2004},
				number={4},
				pages={345--368},
			}

			\bib{DeLo}{book}{
				author={DeVore, Ronald A.},
				author={Lorentz, George G.},
				title={Constructive approximation},
				series={Grundlehren der mathematischen Wissenschaften [Fundamental
					Principles of Mathematical Sciences]},
				volume={303},
				publisher={Springer-Verlag, Berlin},
				date={1993},
			}

			\bib{GiKrSh}{article}{
				author={Gilewicz, J.},
				author={Kryakin, Yu. V.},
				author={Shevchuk, I. A.},
				title={Boundedness by 3 of the Whitney interpolation constant},
				journal={J. Approx. Theory},
				volume={119},
				date={2002},
				number={2},
				pages={271--290},
			}

			\bib{KLPS}{article}{
				author={Kopotun, K. A.},
				author={Leviatan, D.},
				author={Prymak, A.},
				author={Shevchuk, I. A.},
				title={Uniform and pointwise shape preserving approximation by algebraic
					polynomials},
				journal={Surv. Approx. Theory},
				volume={6},
				date={2011},
				pages={24--74},
			}
			
			\bib{Kr}{article}{
				author={Kryakin, Yu. V.},
				title={Exact constants in the Whitney theorem},
				language={russian, with russian summary},
				journal={Mat. Zametki},
				volume={54},
				date={1993},
				number={1},
				pages={34--51, 155},
				issn={0025-567X},
				translation={
					journal={Math. Notes},
					volume={54},
					date={1993},
					number={1-2},
					pages={688--700},
				},
			}
			
			
			
			\bib{RoVa}{book}{
				author={Roberts, A. Wayne},
				author={Varberg, Dale E.},
				title={Convex functions},
				series={Pure and Applied Mathematics},
				volume={Vol. 57},
				publisher={Academic Press [Harcourt Brace Jovanovich, Publishers], New
					York-London},
				date={1973},
			}

			\bib{Schn}{book}{
				author={Schneider, Rolf},
				title={Convex bodies: the Brunn-Minkowski theory},
				series={Encyclopedia of Mathematics and its Applications},
				volume={151},
				edition={expanded edition},
				publisher={Cambridge University Press, Cambridge},
				date={2014},
				pages={xxii+736},
				isbn={978-1-107-60101-7},
			}
			

			\bib{Sha}{book}{
				author={Shapiro, Harold S.},
				title={Topics in approximation theory},
				series={Lecture Notes in Mathematics},
				volume={Vol. 187},
				note={With appendices by Jan Boman and Torbj\"{o}rn Hedberg},
				publisher={Springer-Verlag, Berlin-New York},
				date={1971},
				pages={viii+275},
			}
			
			\bib{Sh-multivar}{article}{
				author={Shvedov, A. S.},
				title={Co-convex approximation of functions of several variables by
					polynomials},
				language={russian},
				journal={Mat. Sb. (N.S.)},
				volume={115(157)},
				date={1981},
				number={4},
				pages={577--589, 655},
			}
			
			\bib{Sh-orders}{article}{
				author={\v{S}vedov, A. S.},
				title={Orders of coapproximations of functions by algebraic polynomials},
				language={russian},
				journal={Mat. Zametki},
				volume={29},
				date={1981},
				number={1},
				pages={117--130, 156},
			}
			
			\bib{Wh}{article}{
				author={Whitney, Hassler},
				title={On functions with bounded $n$th differences},
				journal={J. Math. Pures Appl. (9)},
				volume={36},
				date={1957},
				pages={67--95},
			}

		\end{biblist}
	\end{bibdiv}

\end{document}